\newcommand{\PP}{\mathbb{P}}
\newcommand{\KK}{\mathbb{K}}
\newcommand{\Pic}{\operatorname{Pic}}
\newcommand{\gggg}{\mathfrak{g}}
\newcommand{\hhh}{\mathfrak{h}}
\newcommand{\ttt}{\mathfrak{t}}
\newcommand{\sss}{\mathfrak{s}}
\newcommand{\uni}{\textnormal{uni}}
\newcommand{\red}{\textnormal{red}}
\newcommand{\sms}{\textnormal{ss}}
\DeclareMathOperator{\rk}{rk}
\newtheorem{theorem}{Theorem}[section]
\newtheorem{lemma}[theorem]{Lemma}
\theoremstyle{definition}
\newtheorem{example}[theorem]{Example}
\theoremstyle{remark}
\newtheorem{remark}[theorem]{Remark}
\theoremstyle{proposition}
\newtheorem{proposition}[theorem]{Proposition}
\theoremstyle{corollary}
\newtheorem{corollary}[theorem]{Corollary}
\theoremstyle{conjecture*}
\newtheorem*{conjecture*}{Conjecture}
\numberwithin{equation}{section}
\begin{document}

\title[On Picard number of homogeneous spaces]{On Picard number and dimension \\ of algebraic homogeneous spaces}



\author{Ivan Beldiev \and Dmitry Timashev}
\address{HSE University, Faculty of Computer Science, Pokrovsky Boulevard 11, Moscow, 109028 Russia}
\email{ivbeldiev@gmail.com, isbeldiev@hse.ru}
\address{Lomonosov Moscow State University, Faculty of Mechanics and Mathematics, Department of Higher Algebra, and Moscow Center of Fundamental and Applied Mathematics, 119991 Moscow, Russia}
\email{timashev@mccme.ru}

\thanks{The study was implemented in the framework of the Basic Research Program at the HSE University.}

\subjclass[2020]{Primary 14L30, 17B20; Secondary 20G07}

\keywords{Algebraic variety, affine variety, Picard group, algebraic group, reductive group, homogeneous space}

\date{}

\dedicatory{}

\begin{abstract}
    An algebraic variety $X$ is called a homogeneous space if there exists a transitive regular action of an algebraic group on $X$. We prove inequalities between the dimension of a homogeneous space of a linear algebraic group and its Picard number.
\end{abstract}

\maketitle

\section*{Introduction}

Let $\KK$ be an algebraically closed field of characteristic zero. All algebraic varieties in this paper are defined over $\KK$.

An algebraic variety $X$ is called a \emph{homogeneous space} if it admits a transitive regular action of a linear algebraic group $G$. In this case, $X$ can be identified with the variety of left cosets $G/H$, where $H$ is the stabilizer in $G$ of any point of $X$. Homogeneous spaces have rich structural theory and many applications; see, e.g., \cite{Br, Br2, Gr, Hu, OV, VP-I, Ti}.

In \cite{AZ}, the authors also consider the notion of \emph{homogeneous variety}, i.e., an algebraic variety~$X$ such that the automorphism group $\textnormal{Aut}(X)$ acts on $X$ transitively. In general, $\textnormal{Aut}(X)$ is not an algebraic group, so this notion does not coincide with that of homogeneous space by definition. In fact, there are examples of homogeneous varieties which are not homogeneous spaces. In \cite{AZ}, a series of such examples is given.

Denote by $\rho(X)$ the Picard number of an algebraic variety $X$, i.e., the rank of its Picard group $\textnormal{Pic}(X)$. One of the techniques used in \cite{AZ} is to compare the Picard number and the dimension of a variety. The authors prove that the Picard number of an irreducible affine homogeneous space cannot be greater than its dimension. As an application, several examples of affine homogeneous varieties not satisfying this inequality and thus not isomorphic to homogeneous spaces are given.

Relations between the Picard number and the dimension of an algebraic variety are also studied in other works. A famous example is the generalized Mukai conjecture proposed in~\cite{BCDD}. According to this conjecture, the inequality $\rho(X)\cdot (i_X - 1) \leq \dim X$ holds for Fano varieties, where $i_X$ is the pseudo-index of~$X$. In \cite{BCDD}, this inequality is proved for Fano varieties satisfying certain conditions. There are subsequent papers on the generalized Mukai conjecture; see \cite{Andr, Cas, Cast, Fu, Ga, Pas, Rei}.

It turns out that the inequality $\rho(X) \leq \dim X$ for affine homogeneous spaces $X \simeq G/H$ can be significantly strengthened in certain cases. In this paper, we prove stronger estimates for $\rho(X)$ depending on the group $G$.

The paper is organized as follows. In Section 1, we observe that the inequality $\rho(X) \leq \dim X$ proved in \cite{AZ} for affine homogeneous spaces can be extended to the non-affine case. In Section 2, we prove stronger inequalities for affine homogeneous spaces of reductive groups. Let a connected reductive group $G$ be decomposed into an almost direct product $Z\cdot G_1\cdot G_2\cdots G_m$ of a torus $Z$ and simple groups $G_i$ (see \cite[Chapter 10, Section 27.5]{Hu}). Then for any affine homogeneous space $X$ of $G$ we have
$$\rho(X) \leq \frac{\dim X}{\min \rk(G_i) + 1},$$
where $\rk(G_i)$ is the rank of $G_i$, while in the case $m = 0$, i.e., when $G$ is a torus, we have $\rho(X) = 0$. As a corollary, we prove the inequality $\rho(X) < \sqrt{\dim X}$, where $X$ is an affine homogeneous space of a simple algebraic group.

In Section 3, we  prove the following inequality for an arbitrary homogeneous space $X$ of a connected linear algebraic group $G$:
$$\rho(X) \leq \frac{2\dim X}{\min \rk(G_i) + 1},$$
where $G = (Z\cdot G_1\cdot G_2 \cdots G_m)\ltimes G_{\uni}$ is a Levi decomposition, the subgroups $Z, G_1, G_2, \ldots, G_m$ are the same as above, and $G_{\uni}$ is the unipotent radical of $G$; if $m = 0$, then $\rho(X) = 0$. Then we deduce the inequality $\rho(X) < \sqrt{2\dim X}$ for homogeneous spaces of simple algebraic groups.

Note that all projective homogeneous spaces $X$ are Fano varieties. For homogeneous Fano varieties, the generalized Mukai conjecture holds true, see \cite[Corollary~5.4]{BCDD}. However, our inequalities do not follow from this result. In Remark \ref{Mukai}, we provide an example when our inequalities give a stronger bound. Also, the methods we use are quite different as we work directly with algebraic groups not using geometric tools as in \cite{BCDD}.

\section*{Acknowledgements}
The first author is grateful to his academic supervisor Ivan Arzhantsev for posing the problem and useful discussions. We also wish to thank Cinzia Casagrande for valuable comments on the generalized Mukai conjecture for Fano varieties.

\section{The Picard number of a homogeneous space}

We start with the following proposition.

\begin{proposition}\label{gen_ieq_old}
    Let $X$ be an irreducible homogeneous space of a linear algebraic group. Then the inequality $\rho(X) \leq \dim X$ holds.
\end{proposition}

In the case when $X$ is an affine homogeneous space, Proposition \ref{gen_ieq_old} is proved in \cite[Lemma 2]{AZ}. We recall the proof for the reader's convenience, observing that the affinity assumption can be dropped provided that the group $G$ acting on $X$ is linear. First, we need the following lemma.

\begin{lemma}\label{torus_estimate}
    Let $X = G/H$ be an irreducible homogeneous space of a linear algebraic group $G$. Consider a Levi decomposition $H=H_{\red}\ltimes H_{\uni}$, where $H_{\red}$ is a Levi subgroup and $H_{\uni}$ is the unipotent radical of $H$. Let the identity component of $H_{\red}$ be decomposed into an almost direct product $H_{\red}^0=T\cdot S$ of a torus $T$ and a semisimple group $S$. Then $\rho(X)=\rho(G/H_{\red})\leq\rho(G/H^0)\leq \dim T$.
\end{lemma}

\begin{proof}
    Replacing $G$ with its identity component, we may assume that $G$ is connected. Moreover, there exists a central isogeny $\widehat{G} \to G$ such that $\Pic(\widehat{G}) = 0$ (see \cite[Theorem 3]{VP}). Replacing $G$ with $\widehat{G}$, we may assume that $G$ is connected with $\Pic(G) = 0$.

    By \cite[Corollary of Theorem 4]{VP}, the condition $\Pic(G) = 0$ implies that the group $\Pic(X)$ is isomorphic to $\mathfrak{X}(H)/\mathfrak{X}_G(H)$, where $\mathfrak{X}(H)$ is the group of characters of $H$ and $\mathfrak{X}_G(H)$ is the group of characters of $H$ that can be extended to a character of~$G$. Since unipotent groups have only trivial characters, the restriction map ${\mathfrak{X}(H) \to \mathfrak{X}(H_{\red})}$ is an isomorphism, which implies that $\Pic(X)\simeq\Pic(G/H_{\red})$ and $\rho(X)=\rho(G/H_{\red}) \leq \operatorname{rk}\mathfrak{X}(H)$.

    The characters of $H$ that are trivial on $H^0=H_{\red}^0\ltimes H_{\uni}$ are the characters of the finite group $H/H^0$, hence the restriction homomorphism $\mathfrak{X}(H) \to \mathfrak{X}(H^0)$ has finite kernel. It follows that $\rho(X) \leq \rho(G/H^0)$.

    Since $H_{\red}^0 = T\cdot S$ and semisimple groups have only trivial characters, the restriction homomorphism $\mathfrak{X}(H^0) \to \mathfrak{X}(T)$ is injective. Combining this with the above inequalities, we get $\rho(X) \leq \rho(G/H^0) \leq \operatorname{rk}\mathfrak{X}(H^0) \leq \operatorname{rk}\mathfrak{X}(T) = \dim T$, as desired.
\end{proof}

\begin{proof}[Proof of Proposition \ref{gen_ieq_old}]
    Replacing $G$ with the quotient by the non-effectivity kernel of the action, we may assume that $G$ acts on $X$ effectively. In the notation of Lemma~\ref{torus_estimate}, the torus $T\subset G$ also acts effectively, whence locally freely on $X$. It follows that $\rho(X) \leq \dim T \leq \dim X$, as desired.
\end{proof}

\begin{example}\label{first_example}
    There are examples of homogeneous spaces satisfying the equality $\rho(X) = \dim X$ in any dimension. Indeed, the projective line $\PP^1 \simeq SL(2,\KK)/B$, where $B$ is the Borel subgroup of upper-triangular matrices, is a homogeneous space. This implies that $X_n = (\PP^1)^n$ is also a homogeneous space with $\dim X_n = \rho(X_n) = n$.
\end{example}

\section{Affine homogeneous spaces of reductive groups}


Let us start with the case when $G$ is a simple algebraic group. We prove the following proposition.

\begin{proposition}\label{affine_1}
    Let $X$ be an affine homogeneous space of positive dimension of a simple algebraic group $G$. Then the following inequality holds:
$$\rho(X) \leq \frac{\dim X}{1 + \rk G}.$$
\end{proposition}

\begin{proof}
    By the Matsushima criterion (see, for example, \cite{A_Mats} or \cite[Section 4.7]{VP-I}), $X$ is isomorphic to the quotient $G/H$, where $H$ is a reductive subgroup of $G$. Denote by $T$ the connected center of the identity component $H^0$ of~$H$. As we know from Lemma \ref{torus_estimate}, $\rho(X) \leq \dim T$. Denote by $L$ the centralizer of $T$ in~$G$. Note that $L$ is a Levi subgroup in a certain parabolic subgroup of $G$ (see \cite[Th{\'e}or{\`e}me 4.15]{BT}) and $T$ is contained in the connected center $\widehat{T}$ of $L$, so $\dim T \leq \dim \widehat{T}$. Since $H^0 \subset L$, we have $\dim X \geq \dim G/L$. Therefore it suffices to prove the proposition for $H = L$ and $T=\widehat{T}$.

    Since $L$ is reductive, it decomposes into an almost direct product $L = T \cdot L_1\cdots L_m$, where $L_i$ are simple subgroups. Denote $r = \rk G$, $r_i = \rk L_i$ ($i = 1, 2, \ldots, m$), and $s = \dim T$. Clearly, we have $r = s + r_1 + r_2 + \dots + r_m$. Also, denote by $N$ and $N_i$ the number of roots of $G$ and $L_i$ respectively; we have $\dim G = r + N$, $\dim L_i = r_i + N_i$, and $\dim T = s$. It follows that $\dim X = N - N_1 - N_2 - \dots - N_m$.

    We have $N = rh$ and $N_i = r_ih_i$, where $h$ and $h_i$ are the Coxeter numbers of $G$ and $L_i$ respectively (see \cite[Chapter 5, Theorem 6.2.1]{Bour}). It is also proved in \cite[Chapter~6, Proposition 1.11.31]{Bour} that $h - 1$ is the sum of coefficients of the linear expression of the highest root of $G$ in the basis of simple roots, and the same is true for $h_i - 1$. Since the positive, resp. simple roots of $L_i$ are among the positive, resp. simple roots of $G$, this implies the inequality $h_i \leq h$ for all $i = 1, 2, \ldots, m$. Moreover, since all coefficients of this linear expression are positive integers, it follows that $h \geq r + 1$. Thus we have
    $$\dim X = rh - \sum_{i = 1}^{m}r_ih_i \geq \left(r - \sum_{i=1}^{m}r_i\right)h = sh \geq s(r + 1) \geq \rho(X)\cdot (\rk G + 1),$$
    which is exactly the inequality we are proving.
\end{proof}

Now we proceed to the general case of a reductive group $G$. Any connected reductive group $G$ is isomorphic to an almost direct product $Z\cdot G_1\cdot G_2 \cdots G_m$ of its connected center $Z$ and simple groups $G_i$ ($1\leq i \leq m$). We have the following theorem.

\begin{theorem}\label{affine_ss}
    If $G = Z\cdot G_1\cdot G_2 \cdots G_m$ is a connected reductive group (where $Z$ is the connected center and $G_i$ are the simple factors of $G$) and $X$ is an affine homogeneous space of $G$, then we have
    $$\rho(X) \leq \frac{\dim X}{1 + \min{\rk G_i}}.$$
    If $m = 0$, then $\rho(X) = 0$.
\end{theorem}

\begin{proof}
    By the Matsushima criterion, $X=G/H$ with $H$ reductive. First, let us show that $G$ may be assumed semisimple. Denote the semisimple group $G_1\cdot G_2\cdots G_m$ by~$G_{\sms}$. By Lemma \ref{torus_estimate}, we may assume without loss of generality that $G = Z \times G_{\sms}$ and $H$ is connected.


    The image of $H$ under projection to $Z$ is a subtorus $Z_0\subset Z$. Denote by $Z_1$ any subtorus in $Z$ complementary to $Z_0$. Since $Z = Z_1 \times Z_0$ and $Z_0 \times G_{\sms} = G_{\sms} \cdot H$, the homogeneous space $X$ is isomorphic to
    $$G/H = Z_1 \times (Z_0 \times G_{\sms})/H \simeq Z_1\times G_{\sms}/(H\cap G_{\sms}).$$

    Now consider the homogeneous space $\widetilde{X} = G_{\sms}/(H\cap G_{\sms})$. We have $X \simeq Z_1\times\widetilde{X}$. Clearly, $\dim X \geq \dim{\widetilde{X}}$ and $\Pic(X) = \Pic(\widetilde{X})$, since $Z_1$ is a torus and $\Pic(Z_1)$ is trivial. Therefore we can replace $G$ and $H$ with $G_{\sms}$ and $H\cap G_{\sms}$ respectively.

    Let us now assume that $G$ is semisimple and $H = T\cdot S$ is an almost direct product of a torus $T$ and a semisimple subgroup~$S$. Passing to the corresponding Lie algebras, we obtain the decomposition $$\gggg = \gggg_1 \oplus \gggg_2 \oplus \dots \oplus \gggg_m,$$
    where $\gggg$ and $\gggg_i$ are the Lie algebras of $G$ and $G_i$ respectively. The Lie algebra $\hhh$ of $H$ is of the form $\hhh = \ttt \oplus \sss$, where $\ttt$ is the center of $\hhh$ and $\sss$ is semisimple. Denote by $\pi_i$ the projection of $\hhh$ to $\gggg_i$. Then $\pi_i(\hhh)$ is a reductive Lie subalgebra of $\gggg_i$ and $\pi_i(\hhh) = \pi_i(\ttt) \oplus \pi_i(\sss)$ is its decomposition into the direct sum of the center $\pi_i(\ttt)$ and the semisimple Lie algebra $\pi_i(\sss)$. It follows from Proposition \ref{affine_1} that $$\dim\pi_i(\ttt) \leq \frac{\dim\gggg_i - \dim\pi_i(\hhh)}{1 + \rk \gggg_i} \leq \frac{\dim\gggg_i - \dim\pi_i(\hhh)}{1 + \min{\rk \gggg_i}}.$$

    Now we have
    $$\dim\ttt \leq \sum_{i=1}^{m} \
    \dim\pi_i(\ttt) \quad \text{and} \quad \dim\hhh \leq \sum_{i=1}^{m}\dim\pi_i(\hhh),$$
    hence, by Lemma \ref{torus_estimate},
    \begin{multline*}
      \rho(X) \leq \dim\ttt\leq\sum_{i=1}^{m}\dim\pi_i(\ttt) \leq \sum_{i=1}^{m}\frac{\dim \gggg_i - \dim\pi_i(\hhh)}{1 + \min\rk\gggg_i} \le \\
      \leq \frac{\dim\gggg - \dim\hhh}{1 + \min\rk\gggg_i} = \frac{\dim X}{1 + \min\rk\gggg_i},
    \end{multline*}
    as desired.

    In the case $m = 0$, i.e., when $G$ is a torus, the homogeneous space $X$ is also a torus. Therefore $\rho(X) = 0$.
\end{proof}

\begin{corollary}\label{ineq_ss_cor}
    If $X$ is an affine homogeneous space of positive dimension of a reductive group, then $$\rho (X) \leq \frac{1}{2}\dim X.$$
\end{corollary}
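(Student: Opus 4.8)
The plan is to deduce this bound directly from Corollary \ref{affine_ss}, which already establishes
$$\rho(X) \leq \frac{1}{1 + \min_i \rk G_i}\dim X$$
whenever $X$ is an affine homogeneous space for the semisimple group $G = G_1 \cdot G_2 \cdots G_m$ with each $G_i$ simple. Thus the entire task reduces to bounding the denominator from below, i.e. to giving a lower bound for the quantity $\min_i \rk G_i$.

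First I would recall that each factor $G_i$ is a simple algebraic group, hence non-trivial, so its Lie algebra $\gggg_i$ is a nonzero simple Lie algebra. Since the simple Lie algebra of smallest rank is $\sss\mathfrak{l}_2$ (type $A_1$, of rank $1$), every simple group satisfies $\rk G_i \geq 1$. Therefore $\min_i \rk G_i \geq 1$, which gives $1 + \min_i \rk G_i \geq 2$.

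Combining this with Corollary \ref{affine_ss} then yields
$$\rho(X) \leq \frac{1}{1 + \min_i \rk G_i}\dim X \leq \frac{1}{2}\dim X,$$
which is precisely the claimed inequality.

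I do not expect any real obstacle here, as the statement is a direct specialization of the preceding corollary; the only ingredient beyond it is the elementary fact that a simple group has positive rank. I would also remark that the constant $\frac{1}{2}$ cannot be improved in general: taking $G = SL(2,\KK)$ and $H$ a maximal torus $T$, the space $X = G/T$ is affine by the Matsushima criterion, of dimension $\dim G - \dim T = 2$, with $\rho(X) = \rk\bigl(\mathfrak{X}(T)/\mathfrak{X}_G(T)\bigr) = 1$, so the equality $\rho(X) = \frac{1}{2}\dim X$ holds.
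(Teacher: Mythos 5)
Your proof is correct and is essentially identical to the paper's: both deduce the bound from Corollary \ref{affine_ss} together with the elementary fact that every simple group has rank at least $1$. Your additional remark on sharpness via $SL(2,\KK)/T \cong Q_2$ matches the example the paper gives immediately after the corollary.
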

\begin{proof}
This follows immediately from Theorem \ref{affine_ss} and the fact that the rank of any simple group is at least $1$. In the case when the decomposition of $G$ is a torus, we have $\rho(X) = 0$ and the inequality in the statement also holds.
\end{proof}

If the acting group $G$ is simple, then we deduce from Proposition \ref{affine_1} a stronger inequality between $\rho(X)$ and $\dim X$ that does not depend on $\rk G$.

\begin{corollary}\label{aff_sqrt}
    Let $X$ be an affine homogeneous space of a simple group $G$ of positive dimension. Then $\rho(X) < \sqrt{\dim X}$.
\end{corollary}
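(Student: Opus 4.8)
The plan is to combine Theorem \ref{affine} with a second, cruder upper bound on $\rho(X)$ that is already contained in the proof of Proposition \ref{gen_ineq}. Writing $r = \rk(G)$ and $n = \dim X$, Theorem \ref{affine} gives $\rho(X)(r+1) \le n$. A naive attempt would be to note $n \le \dim G$ and hope that $\dim G < (r+1)^2$, which would immediately yield $\rho(X) \le n/(r+1) < \sqrt{n}$; but this fails, since among simple groups of rank $r$ only those of type $A_r$ satisfy $\dim G = (r+1)^2 - 1 < (r+1)^2$, while for the other types $\dim G$ exceeds $(r+1)^2$. So I would instead bound $\rho(X)$ directly by $r$, independently of $\dim X$.

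The key observation is that $\rho(X) \le \rk(G)$. This is implicit in the proof of Proposition \ref{gen_ineq}: there we showed $\rho(X) \le \operatorname{rk}\mathfrak{X}(H) \le \dim T$, where $T$ is the central torus in the reductive part of the identity component $H^0$. Since $T$ is a torus contained in $G$, it lies in some maximal torus of $G$, whence $\dim T \le \rk(G) = r$. Thus $\rho(X) \le r$.

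Finally I would feed this back into Theorem \ref{affine}. From $\rho(X)(r+1) \le n$ together with $\rho(X) \le r$ we obtain
$$n \ge \rho(X)(r+1) \ge \rho(X)\bigl(\rho(X)+1\bigr) = \rho(X)^2 + \rho(X).$$
If $\rho(X) \ge 1$ this gives $n > \rho(X)^2$, that is $\rho(X) < \sqrt{n}$; and if $\rho(X) = 0$ the inequality $\rho(X) < \sqrt{n}$ is trivial because $n = \dim X > 0$. The only real subtlety, and the step I expect to be the main obstacle, is recognizing that the linear bound of Theorem \ref{affine} must be paired with the rank bound $\rho(X) \le r$ rather than with a dimension estimate $\dim X \le \dim G$; once that pairing is made, the conclusion is a one-line manipulation.
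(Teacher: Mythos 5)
Your argument is correct and coincides with the paper's own proof: both combine the bound $\rho(X)\le\dim T\le\rk(G)$ extracted from the proof of Proposition \ref{gen_ineq} with Theorem \ref{affine} to get $\rho(X)^2<(\rk(G)+1)\rho(X)\le\dim X$ when $\rho(X)\ge 1$, handling $\rho(X)=0$ trivially. The preliminary remarks about why $\dim G<(\rk(G)+1)^2$ fails are not needed, but the proof itself is exactly the paper's.
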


\begin{proof}
    If $X = G/H$ for a reductive algebraic subgroup $H\subset G$, then $\rho(X) \leq \dim T$  by Lemma \ref{torus_estimate}, where $T$ is the connected center of $H$. Since $\rk G$ is the dimension of a maximal torus in $G$, we have $\dim T \leq \rk G < \rk G + 1$. If $\rho(X) \ne 0$, then this implies $$\rho(X)^2 < (\rk G + 1)\rho(X) \leqslant \dim X$$ by Proposition \ref{affine_1}, so $\rho(X) < \sqrt{\dim X}$ as desired. For $\rho(X) = 0$, the inequality in the statement is obvious.
\end{proof}

\begin{example}
    Let $X = G/H$ be an affine homogeneous space of positive dimension of a simply connected semisimple group $G$. It follows from the proof of Theorem~\ref{affine_ss} that the inequality from Corollary \ref{ineq_ss_cor} turns into equality if and only if $G$ is the direct product of $n$ copies of the special linear group~$SL_2$ and $H = T^n$, where $T\subset SL_2$ is the subgroup of diagonal matrices. It is known that $SL_2/T$ is the non-degenerate smooth affine quadric $Q_2$ of dimension~$2$, hence $X \simeq Q_2^n$ with $\dim X = 2n$ and $\rho(X) = n$. In particular, we see that for any even positive integer $2n$ there exists a unique affine homogeneous space of a semisimple group of dimension $2n$ and Picard number $n$.
\end{example}

\section{Homogeneous spaces of linear algebraic groups: the general case}

In this section, we consider the general case when $X$ is a homogeneous space of an arbitrary linear algebraic group $G$. We start with the following lemma.

\begin{lemma}\label{isotropic}
    Let $G$ be a reductive group and $H\subset G$ a closed subgroup with a Levi decomposition $H=H_{\red}\ltimes H_{\uni}$, where $H_{\red}$ is a reductive subgroup and $H_{\uni}$ is the unipotent radical of $H$. Then
    $$\dim G/H \geqslant \frac{1}{2}\dim G/H_{\red}.$$
\end{lemma}

\begin{proof}
    Denote the Lie algebras of $G$, $H$, $H_{\red}$, $H_{\uni}$ by $\gggg$, $\hhh$, $\hhh_{\red}$, $\hhh_{\uni}$, respectively. Consider an invariant symmetric bilinear form $\beta$ on $\gggg$ defined by the formula
    $$\beta(\xi,\eta)=\operatorname{tr}\bigl(dR(\xi)\cdot dR(\eta)\bigr),\qquad\xi,\eta\in\gggg,$$
    where $R$ is a faithful linear representation of $G$. Since $G$ is reductive, the form $\beta$ is non-degenerate. It is also known that the kernel of the restriction of $\beta$ to $\hhh$ coincides with $\hhh_{\uni}$ and, in particular, the restriction of $\beta$ to $\hhh_{\red}$ is non-degenerate (see, e.g., \cite[Chapter 4, Section 1]{OV}). It follows that the restriction of $\beta$ to the orthogonal complement $\hhh_{\red}^\perp$ is also non-degenerate and $\hhh_{\uni} \subset \hhh_{\red}^\perp$ is an isotropic subspace. This implies the inequality $\dim \hhh_{\uni} \leq \frac{1}{2}\dim \hhh_{\red}^\perp = \frac{1}{2}\dim G/H_{\red}$. Consequently,
    \begin{equation*}
    \dim G/H = \dim G/H_{\red} - \dim H_{\uni} \geq \frac{1}{2}\dim G/H_{\red}.\qedhere
    \end{equation*}
\end{proof}

Now we are ready to prove the following theorem.

\begin{theorem}\label{gen}
    Let $X$ be a homogeneous space of positive dimension of a connected linear algebraic group~$G$ with a Levi decomposition $G=(Z\cdot G_1\cdot G_2\cdots G_m)\ltimes G_{\uni}$, where $Z$ is a torus, $G_i$ are simple groups, and $G_{\uni}$ is the unipotent radical of $G$. Then the following inequality holds:
    $$\rho(X) \leq \frac{2\dim X}{\min \rk(G_i) + 1}$$
    If $m = 0$, then $\rho(X) = 0$.
\end{theorem}

\begin{proof}
    First, we show that $G$ may be assumed reductive. Consider the Levi subgroup $G_{\red}=Z\cdot G_1\cdot G_2\cdots G_m$ of $G$. Let $X = G/H$. Denote $\widetilde H = HG_{\uni}$ and $\widetilde X = G/\widetilde H$. It is clear that $\dim \widetilde X \leq \dim X$. Moreover, $H$ and $\widetilde{H}$ share one and the same Levi subgroup $H_{\red}$. By Lemma~\ref{torus_estimate}, $\rho(X) = \rho(\widetilde X)$. Therefore it suffices to prove the inequality in the statement for $G/\widetilde{H}$, which is a homogeneous space for $G_{\red}$. Thus we may assume without loss of generality that $G$ is reductive.

    As before, we have $H = H_{\red}\ltimes H_{\uni}$. By Lemma \ref{isotropic}, $\dim X \geq \frac{1}{2}\dim G/H_{\red}$. On the other side, by Lemma~\ref{torus_estimate} and Theorem \ref{affine_ss},
    $$\rho(X) = \rho(G/H_{\red}) \leq \frac{\dim (G/H_{\red})}{1 + \min \rk G_i} \leq \frac{2\dim X}{1 + \min\rk G_i},$$
    as desired. If $m = 0$, then $\rho(G/H_{\red}) = 0$ again by Theorem \ref{affine_ss}, hence $\rho(X) = 0$.
\end{proof}

Similarly to the affine case, we have the following estimate for the Picard number of a homogeneous space of a simple group.

\begin{corollary}\label{gen_s}
    Let $X$ be a homogeneous space of a simple algebraic group. Then
    $$\rho(X) < \sqrt{2\dim X}.$$
\end{corollary}

\begin{proof}
    Let $X = G/H$, where $H$ has a Levi decomposition $H=H_{\red}\ltimes H_{\uni}$, as above. By Lemma \ref{isotropic}, we have $\dim X \geq \frac{1}{2} \dim G/H_{\red}$. Then Corollary \ref{aff_sqrt} applied to the affine variety $G/H_{\red}$ implies the desired inequality.
\end{proof}

\begin{remark}
    Let $X = G/H$ be a projective homogeneous space of positive dimension of a simply connected semisimple group $G$ acting locally effectively. It follows from the proof of Theorem \ref{gen} that the inequality $\rho(X) \leq \dim X$ turns into equality if and only if $G$ is a direct product of $n$ copies of the special linear group $SL_2$ and $X \simeq (\PP^1)^n$ with $\dim X = \rho(X) = n$; cf.\ Example~\ref{first_example}.
\end{remark}

\begin{remark}\label{Mukai}

Note that the inequalities we proved do not follow from the generalized Mukai conjecture. For example, take $G = SL_n$ and $B\subset G$ a Borel subgroup. In this case, the variety $X = SL_n/B$ is the complete flag variety and it is known that the pseudoindex $i_X$ of $X$ is equal to $2$. So, the generalized Mukai conjecture implies the inequality $\rho(X) \leq \dim(X)$ while Theorem \ref{gen} gives a stronger bound, namely $\rho(X) \leq \frac{2}{n}\dim X$. The latter bound is in fact optimal in this case since $\dim X = \frac{n(n-1)}{2}$ and $\rho(X) = n-1$.

\end{remark}









\end{document}